%

\documentclass[article]{amsart}

 \newtheorem{thm}{Theorem}[section]
 \newtheorem{cor}[thm]{Corollary}
 \newtheorem{lem}[thm]{Lemma}
 
 \theoremstyle{definition}
 
 \theoremstyle{remark}
 \newtheorem{rem}[thm]{Remark}
 \theoremstyle{definition}


 \newcommand{\PP}{\mathbb{P}}

\def\move-in{\parshape=1.75true in 5true in}


\usepackage{xypic}

\begin{document}

\title{The Solution to Waring's Problem for Monomials}

\author[E. Carlini]{Enrico Carlini}
\address[E. Carlini]{Dipartimento di Matematica, Politecnico di Torino, Turin, Italy}
\email{enrico.carlini@polito.it}

\author[M.V.Catalisano]{Maria Virginia Catalisano}
\address[M.V.Catalisano]{Dipartimento di Ingegneria della Produzione, Termoenergetica e Modelli
Matematici, Universit\`{a} di Genova, Genoa, Italy.}
\email{catalisano@diptem.unige.it}

\author[A.V. Geramita]{Anthony V. Geramita}
\address[A.V. Geramita]{Department of Mathematics and Statistics, Queen's University, King\-ston, Ontario, Canada and Dipartimento di Matematica, Universit\`{a} di Genova, Genoa, Italy}
\email{Anthony.Geramita@gmail.com \\ geramita@dima.unige.it  }

\maketitle


\begin{abstract}
In the polynomial ring $T=k[y_1,\ldots,y_n]$, with $n>1$, we bound
the multiplicity of homogeneous radical ideals $I\subset
(y_1^{a_1},\ldots,y_n^{a_n})$ such that $T/I$ is a graded
$k$-algebra with Krull dimension one.  As a consequence we solve
the Waring Problem for all monomials, i.e. we compute the minimal
number of linear forms needed to write a monomial as a sum of
powers of these linear forms. Moreover, we give an explicit description of a sum of
powers decomposition for monomials. We also produce new bounds for the Waring
rank of polynomials which are a sum of pairwise coprime monomials.
 \end{abstract}

\section{Introduction}

Let $k$ be an algebraically closed field of characteristic zero and
 $k[x_1,\ldots,x_n]$ the standard graded polynomial ring in $n$ variables. Given a degree $d$ form $F$ the {\it Waring Problem for Polynomials} asks for the least value of $s$ for which there exist linear forms $L_1,\ldots,L_s$ such that

\[F=\sum_1^s L_i^d \ . \]
This value of $s$ is called the {\it Waring rank} of $F$ (or simply the {\it rank} of $F$) and will be denoted by $\mathrm{rk}(F)$.

There was a long-standing conjecture describing the rank of a generic form $F$ of degree $d$, but the verification of that conjecture was only found relatively recently in the famous work of J. Alexander and A. Hirschowitz \cite{AH95}.  However, for a given {\it specific} form $F$ of degree $d$ the value of $\mathrm{rk}(F)$ is not known in general. Moreover, there is no direct algorithmic way to compute the rank of a given form.
Given this state of affairs, several attempts have been made to compute the rank of specific forms.  One particular family of examples that has attracted attention is the collection of monomials.

A few cases where the ranks of specific monomials are computed can be found in \cite{LM} and in \cite{LandsbergTeitler2010}. The most complete result in this direction is in \cite{RS2011} where the authors determine $\mathrm{rk}(M)$ for the monomials
\[M=\left(x_1\cdot\ldots\cdot x_n\right)^m\] for any $n$ and $m$. In particular, they show that
$\mathrm{rk}(M)=(m+1)^{n-1}$.  In this paper we generalize that result and find  $\mathrm{rk}(M)$ for any monomial, thus completely solving the Waring Problem for monomials.

Our approach to solving the Waring Problem for specific
polynomials follows a well known path, namely the use of the
Apolarity Lemma \ref{apolarityLEMMA} to relate the computation of
$\mathrm{rk}(F)$ to the study of ideals of reduced points
contained in the ideal $F^\perp$, see Section \ref{mainresultSEC}.
For the special case in which $F$ is a monomial we get a bound on
the multiplicity of an ideal of reduced points $I\subset F^\perp$,
see Theorem \ref{mainthm}. Among the consequences of this result
is Corollary \ref{monmialsumofpoercorol} where we show that
\[\mathrm{rk}(x_1^{b_1}\cdot\ldots\cdot x_n^{b_n})=\prod_{i=2}^n(b_i+1),\]
where $1\leq b_1\leq \ldots \leq b_n$. Moreover, we describe an
explicit sum of powers decomposition for monomials, see Corollary
\ref{sumofpowerdecmon}. We also obtain a similar result for the
forms which are a sum of pairwise coprime monomials, see Corollary
\ref{coprimemonomialscor}.

\section{Basic facts}

We consider $k$ an algebraically closed field of characteristic
zero and the polynomial ring $T= \oplus_{i=0}^\infty T_i = k[y_1,\ldots,y_n]$. Given a homogeneous ideal
$I\subset T$ we denote by $$HF(T/I,i)= \dim_kT_i -\dim_k I_i$$ its {\it Hilbert
function} in degree $i$. It is well known that for all $i > > 0$ the function $HF(T/I,i)$ is a polynomial function with rational coefficients, called the {\it Hilbert polynomial} of $T/I$.  We say that an ideal $I\subset T$ is one
dimensional if the Krull dimension of $T/I$ is one, equivalently the Hilbert polynomial of $T/I$ is some integer constant, say $s$.  The integer $s$ is then called the {\it multiplicity} of $T/I$.  If, in addition, $I$ is a radical ideal, then $I$ is the ideal of a set of $s$ distinct points.  We will use the fact that if $I$ is a one dimensional saturated ideal of multiplicity $s$, then $HF(T/I, i)$ is always $\leq s$.

Let $S$ be another polynomial ring,  $S = \oplus _{i=0}^\infty S_i
= k[x_1,\ldots,x_n]$.  We make $S$ into a $T$-module by having the
variables of $T$ act as partial differentiation operators, e.g. we
think of $y_1 =\partial/ \partial x_1$. (see, for example,
\cite{IaKa} or \cite{Ge}).  We refer to a polynomial in $T$ as
$\partial$ instead of using capital letters. In particular, for
any form $F$ in $S_d$ we define the ideal $F^\perp\subseteq T$ as
follows:
\[F^\perp=\left\{\partial\in T : \partial F=0\right\}.\]

The following lemma, which we will call {\it Apolarity Lemma},  is a consequence of \cite[Lemma 1.15]{IaKa}.

\begin{lem}\label{apolarityLEMMA}
A homogeneous degree $d$ form $F\in S$ can be written as
\[F=\sum_{i=1}^s L_i^d , \ L_i \hbox{ pairwise linearly independent}\]
 if and only if there exists $I\subset F^\perp$ such that $I$ the ideal of a set of $s$ distinct points in $\PP^{n-1}$.
\end{lem}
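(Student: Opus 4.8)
The plan is to push both implications through the \emph{apolarity pairing} $T_d\times S_d\to k$, $(\partial,G)\mapsto \partial G\in S_0=k$, which is a perfect pairing, together with the dictionary between a point $P=[a_1:\cdots:a_n]\in\PP^{n-1}$ and the linear form $L_P=a_1x_1+\cdots+a_nx_n\in S_1$ it represents; under this dictionary distinct points correspond to pairwise linearly independent linear forms. The only computational ingredient is the differentiation identity: for $\partial\in T_j$ with $j\le d$,
\[ \partial\, L_P^d=\frac{d!}{(d-j)!}\,\partial(a_1,\dots,a_n)\,L_P^{d-j}, \]
so $\partial\, L_P^d=0$ whenever $\partial$ vanishes at $P$, and $\partial\, L_P^d=0$ automatically when $j>d$. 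Note also that $F^\perp$ is a homogeneous ideal, since applying the homogeneous components of $\partial$ to $F$ lands in distinct degrees.

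For ``$\Rightarrow$'' I would argue directly. Given $F=\sum_{i=1}^s L_i^d$ with the $L_i$ pairwise linearly independent, let $P_i$ be the point represented by $L_i$, let $\wp_{P_i}\subset T$ be its homogeneous (prime) ideal, and let $I=\wp_{P_1}\cap\cdots\cap\wp_{P_s}=I(\XX)$ be the radical ideal of the set $\XX=\{P_1,\dots,P_s\}$ of $s$ distinct points. Since $F^\perp$ is homogeneous it suffices to check that every homogeneous $\partial\in I$ kills $F$: such a $\partial$ lies in each $\wp_{P_i}$, hence vanishes at each $P_i$, so the identity above (treating $\deg\partial\le d$ and $\deg\partial>d$ separately) gives $\partial\, L_i^d=0$ for all $i$, whence $\partial F=\sum_i\partial\, L_i^d=0$. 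Thus $I\subseteq F^\perp$ is the ideal of $s$ distinct points.

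For ``$\Leftarrow$'' I would exploit perfectness of the pairing. Suppose $I\subseteq F^\perp$ with $I=I(\XX)$, $\XX=\{P_1,\dots,P_s\}$ distinct points, $L_i=L_{P_i}$. In degree $d$ the identity reads $\partial\, L_i^d=d!\,\partial(P_i)$ for $\partial\in T_d$, so (as $\operatorname{char}k=0$) a form $\partial\in T_d$ kills $\operatorname{span}\{L_1^d,\dots,L_s^d\}$ iff it vanishes at every $P_i$, i.e.\ iff $\partial\in I_d$; hence $\operatorname{span}\{L_1^d,\dots,L_s^d\}$ is exactly the orthogonal complement of $I_d$ in $S_d$ under the perfect pairing $T_d\times S_d\to k$. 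Now $I\subseteq F^\perp$ forces $\partial F=0$ for all $\partial\in I_d$, i.e.\ $F$ lies in that orthogonal complement, so $F=\sum_{i=1}^s c_iL_i^d$; as $k$ is algebraically closed, $c_i=\mu_i^d$ and $F=\sum_{i=1}^s(\mu_iL_i)^d$. Discarding the vanishing terms (equivalently, taking $\XX$ minimal among reduced subschemes whose ideal lies in $F^\perp$, in which case ``$\Rightarrow$'' forces every $c_i\ne0$) presents $F$ as a sum of at most $s$ — and for minimal $\XX$ exactly $s$ — pairwise independent $d$-th powers. Equivalently, one can phrase this in the language of inverse systems: $I\subseteq F^\perp$ gives $(F^\perp)^{-1}\subseteq I^{-1}$, and $F\in(F^\perp)^{-1}$ while $\bigl(I(\XX)^{-1}\bigr)_d=\operatorname{span}\{L_1^d,\dots,L_s^d\}$, because the inverse system of a reduced point is spanned degreewise by the powers of its linear form and inverse systems turn intersections of ideals into sums — this is the guise in which the statement sits inside \cite[Lemma 1.15]{IaKa}.

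I do not anticipate a real obstacle: the lemma is classical and each implication is only a few lines. The one mildly delicate point is that ``$\Leftarrow$'' rests on perfectness of the apolarity pairing $T_d\times S_d\to k$ (to get $(U^\perp)^\perp=U$), which is precisely the structural input being imported from \cite[Lemma 1.15]{IaKa}; and there is the cosmetic gap between the ``at most $s$'' the argument naturally produces and the literal ``$s$'' in the statement, harmlessly closed by passing to a minimal $\XX$ (or by observing that the rank computation only needs $\mathrm{rk}(F)\le s$).
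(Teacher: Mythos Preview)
Your proof is correct. Note that the paper itself does not prove this lemma at all: it simply records it as ``a consequence of \cite[Lemma~1.15]{IaKa}'' and moves on. What you have written is essentially the standard argument behind that citation --- the apolarity pairing, the differentiation identity $\partial\,L_P^d=\frac{d!}{(d-j)!}\partial(P)L_P^{d-j}$, and the identification of $(I_d)^\perp$ with $\operatorname{span}\{L_1^d,\dots,L_s^d\}$ via perfectness --- so in spirit you are supplying exactly the proof the paper elides. There is nothing genuinely different to compare.

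The one soft spot you already flagged is real but harmless: the ``$\Leftarrow$'' direction naturally yields $F$ as a sum of \emph{at most} $s$ pairwise independent $d$-th powers, not exactly $s$, since some coefficients $c_i$ may vanish. As you note, this is cosmetic for the paper's purposes (only the inequality $\mathrm{rk}(F)\le s$ is ever used downstream), and passing to a minimal $\XX$ closes the gap for a literal reading of the biconditional.
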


We will also need the following fact.

\begin{lem}\label{HFlemma} For $n>1$ consider the ideal $J=(y_1,y_2^{a_2},\ldots,y_n^{a_n})\subset
T$, where $2\leq a_2\leq\ldots \leq a_n$.  Set $\tau=a_2+\ldots
+a_n-(n-1)$. Then,
\[\sum_{i=a_2}^\tau HF(T/J,i)=(\prod_{i=2}^n a_i) - {a_2+n-2 \choose n-1}.\]
\end{lem}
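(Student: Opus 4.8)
The plan is to compute $HF(T/J,i)$ explicitly for each $i$ in the relevant range and then sum. Since $J=(y_1,y_2^{a_2},\ldots,y_n^{a_n})$, quotienting by $y_1$ first gives $T/J \cong R/(z_2^{a_2},\ldots,z_n^{a_n})$ where $R=k[z_2,\ldots,z_n]$ is a polynomial ring in $n-1$ variables. This is a complete intersection Artinian algebra, so its Hilbert function is symmetric and has a well-understood generating series, namely $\prod_{j=2}^n (1+t+\cdots+t^{a_j-1}) = \prod_{j=2}^n \frac{1-t^{a_j}}{1-t}$. The socle degree of this algebra is exactly $\tau = \sum_{j=2}^n (a_j-1) = a_2+\cdots+a_n-(n-1)$, so $HF(T/J,i)=0$ for $i>\tau$, and the whole algebra is ``used up'' by degree $\tau$.

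The next step is to observe that the total dimension $\dim_k (R/(z_2^{a_2},\ldots,z_n^{a_n})) = \sum_{i=0}^\tau HF(T/J,i) = \prod_{j=2}^n a_j$, since the generating series evaluated at $t=1$ gives $\prod_{j=2}^n a_j$. Therefore $\sum_{i=a_2}^\tau HF(T/J,i) = \prod_{j=2}^n a_j - \sum_{i=0}^{a_2-1} HF(T/J,i)$, and it remains to identify the truncated sum $\sum_{i=0}^{a_2-1} HF(T/J,i)$. The key point here is that for $i \le a_2-1 \le a_j - 1$ for all $j$, none of the relations $z_j^{a_j}$ has kicked in yet, so in degrees $0,1,\ldots,a_2-1$ the algebra $R/(z_2^{a_2},\ldots,z_n^{a_n})$ agrees with the full polynomial ring $R$ in $n-1$ variables. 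Hence $HF(T/J,i) = \binom{i+n-2}{n-2}$ for $0\le i\le a_2-1$, and summing via the hockey-stick identity $\sum_{i=0}^{a_2-1}\binom{i+n-2}{n-2} = \binom{a_2+n-2}{n-1}$ gives exactly the claimed formula.

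The main obstacle, such as it is, is purely bookkeeping: making sure the range conditions line up, i.e. that $a_2-1 < a_2 \le a_j$ for all $j\ge 2$ so that the ``polynomial ring'' identification is valid through degree $a_2-1$, and that $\tau \ge a_2$ so the summation range $[a_2,\tau]$ is nonempty (which holds since $a_3+\cdots+a_n \ge (n-2)$ with equality impossible unless... — in fact $\tau - a_2 = \sum_{j=3}^n(a_j-1) \ge 0$, with the degenerate case $n=2$ giving $\tau=a_2$ and an empty-or-single-term sum that still checks out, since then the right side is $a_2 - \binom{a_2}{1}=0$). I would also double-check the combinatorial identity $\sum_{i=0}^{m-1}\binom{i+r}{r}=\binom{m+r}{r+1}$ with $r=n-2$, $m=a_2$. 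Everything else follows from the standard theory of Hilbert functions of monomial complete intersections.
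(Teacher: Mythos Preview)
Your argument is correct and is essentially the same as the paper's: identify $T/J$ with an Artinian complete intersection in $n-1$ variables, use that its total dimension is $\prod_{i=2}^n a_i$, note that in degrees $0,\ldots,a_2-1$ no relation beyond $y_1$ is active so the Hilbert function agrees with that of a polynomial ring in $n-1$ variables, and subtract using the hockey-stick identity. One small slip in your side remark: $\tau - a_2 = \sum_{j=3}^n(a_j-1) - 1$, not $\sum_{j=3}^n(a_j-1)$, so for $n=2$ one has $\tau = a_2-1$ (empty sum, both sides zero) rather than $\tau=a_2$; this does not affect the main argument.
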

\begin{proof} We first note that $J = (y_1, y_2^{a_2}, \ldots , y_n^{a_n})$ is a homogeneous complete intersection ideal in $k[y_1, \ldots , y_n]$.  In this case, it is well known that
$$A = k[y_1, \ldots ,y_n]/J = \oplus_{i=0}^\infty A_i$$
is an Artinian Gorenstein ring for which:

\begin{enumerate}

\item if $\tau = ( a_2 + \cdots + a_n) - (n-1)$ then $\dim A_\tau \neq 0$  (in fact, its dimension is 1) and $\dim A_\ell =  0$ for $\ell > \tau$;

\item\label{ii} $\dim_kA = \sum_{i=0}^ \tau  \dim_kA_i = 1\cdot a_2\cdots a_n = \prod_{i=2}^n {a_i}$. Since $J_i = (y_1)_i$ for $0 \leq i \leq a_2 - 1$, it follows that (for these same $i$) we have
$$
\dim_kA_i = k[y_1, \ldots , y_n]_i - \dim_k k[y_1, \ldots , y_n]_{i-1} = {i+n-2\choose n-2}.
$$

\end{enumerate}

As a consequence we easily get that
$$
\sum_{i=0}^{a_2 -1} \dim_kA_i = \sum_{i=0}^{a_2 -1} {i+n-2\choose n-2} = {a_2 + n-2 \choose n-1} .
$$
Thus, rewriting \eqref{ii} above, we get
$$
\sum_{i=a_2}^\tau HF(T/J, i) = (\prod_{i=2}^n {a_i}) - {a_2 + n-2 \choose n-1}
$$
which is what we wanted to prove.
\end{proof}

We conclude with the following trivial, but useful, remark.

\begin{rem}\label{leastvarREM}{
The rank of a form $F$ can be computed in the
polynomial ring with the least number of variables containing $F$.
To see this, consider a rank $d$ form $F\in k[x_1,\ldots,x_n]$ and
suppose we know $\mbox{rk}(F)$. We can also consider $F\in
k[x_1,\ldots,x_n,y]$ and we can look for a sum of powers
decomposition of $F$ in this extended ring. If
\[F(x_1,\ldots,x_n)=\sum_1^r
\left(L_i(x_1,\ldots,x_n,y)\right)^d,\] then, by setting $y=0$, we
readily get $r\geq \mbox{rk}(F)$. Thus, by adding variables we can
not get a sum of powers decomposition involving fewer summands. In
particular, given a monomial
\[M=x_1^{b_1}\cdot\ldots\cdot
x_n^{b_n},\] with $1\leq b_1\leq\ldots\leq b_n$ it is enough to
work in $k[x_1,\ldots,x_n]$ in order to compute $\mbox{rk}(F)$.}\end{rem}

\section{Main result and applications}\label{mainresultSEC}

\begin{thm}\label{mainthm} Let $n>1$ and  $K=(y_1^{a_1},\ldots,y_n^{a_n})$ be an ideal of $T$ with $2\leq a_1\leq\ldots
\leq a_n$. If $I\subset K$ is a one dimensional radical ideal of
multiplicity $s$, then
\[s\geq \prod_{i=2}^n a_i.\]
\end{thm}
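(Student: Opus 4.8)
The plan is to argue by induction on $n$, using a hyperplane section (general linear form) to reduce the dimension. First I would handle the base case $n=2$: here $K=(y_1^{a_1},y_2^{a_2})$ is a complete intersection in $k[y_1,y_2]$, so any one-dimensional radical ideal $I\subset K$ defines $s$ distinct points in $\PP^1$; since $I\subset (y_1^{a_1},y_2^{a_2})$ and a set of fewer than $a_2$ points imposes independent conditions in degree $a_2-1$, while every element of $K$ of degree $a_2-1$ is a multiple of $y_1^{a_1}$ (forcing many vanishing conditions), one extracts $s\geq a_2$. More precisely, I would compare Hilbert functions: $HF(T/I,i)\leq s$ for all $i$ since $I$ is saturated of multiplicity $s$, while $HF(T/K,i)\leq HF(T/I,i)$ for the part we control.

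For the inductive step, let $\ell$ be a general linear form and pass to $\overline{T}=T/(\ell)\cong k[\text{$n-1$ variables}]$. Write $\overline{I}$ for the image of $I$. Since $I$ is radical one-dimensional (the ideal of $s$ points in $\PP^{n-1}$), a general $\ell$ misses all the points, so $\overline{I}$ is an Artinian ideal with $\dim_k \overline{T}/\overline{I} = s$; equivalently $\sum_i HF(\overline{T}/\overline{I},i)=s$. The key inclusion is that $\overline{I}\subset \overline{K}$, where $\overline{K}$ is the image of $K$ in $\overline{T}$. The idea is then to bound $\sum_i HF(\overline{T}/\overline{I},i)$ from below by $\sum_i HF(\overline{T}/\overline{K},i)$ in a suitable range of degrees, and to recognize the latter as a complete-intersection-type quantity computed by Lemma \ref{HFlemma}. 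Concretely, after a general change of coordinates $\ell$ can be taken so that $\overline{K}$ is related to $(\overline{y_1},\overline{y_2}^{a_2},\ldots)$-type ideals, or one peels off one variable at a time and uses $HF(\overline{T}/\overline{I},i)\geq HF(\overline{T}/\overline{K},i)$ for $i$ up to the socle degree $\tau = a_2+\cdots+a_n-(n-1)$, yielding
\[
s \;=\; \sum_{i\geq 0} HF(\overline{T}/\overline{I},i)\;\geq\; \sum_{i=0}^{\tau} HF(\overline{T}/\overline{K},i)\;\geq\; \prod_{i=2}^{n} a_i
\]
after accounting for the small-degree terms via Lemma \ref{HFlemma} and the trivial lower bounds $HF(\overline{T}/\overline{I},i)\geq \binom{i+n-2}{n-2}$ for $i\leq a_2-1$ (which hold because $\overline I$ contains no forms of degree $<a_2$, as those would have to lie in $\overline K$).

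The main obstacle I anticipate is the containment/degree bookkeeping: one must ensure that the generic hyperplane section is compatible with the monomial structure of $K$ (so that $\overline{K}$ really is controlled by a complete intersection of the right multidegree $(a_2,\ldots,a_n)$), and that the inequality $HF(\overline{T}/\overline{I},i)\geq HF(\overline{T}/\overline{K},i)$ genuinely holds in every degree $i\le\tau$ rather than just asymptotically — this is where the hypothesis that $I$ is \emph{radical} (hence $\overline I$ is the full Artinian reduction, with total dimension exactly $s$) is essential, and where I'd have to be careful that no contributions in degrees $>\tau$ are being silently dropped. Controlling the low-degree terms $i<a_2$, where $\overline I$ and $\overline K$ may differ, is the delicate point, and Lemma \ref{HFlemma} is precisely engineered to absorb exactly that discrepancy via the binomial correction $\binom{a_2+n-2}{n-1}$.
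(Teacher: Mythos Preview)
Your approach has a genuine gap, and it lies precisely in the tension you flagged but did not resolve: a \emph{general} linear form $\ell$ is incompatible with the monomial structure of $K$. When you pass to $\overline T=T/(\ell)$, the image $\overline K$ is generated by the images of \emph{all} of $y_1^{a_1},\dots,y_n^{a_n}$; in particular it contains the extra generator $\overline{y_1}^{\,a_1}$ of degree $a_1$, so $\overline K$ strictly contains a complete intersection of type $(a_2,\dots,a_n)$ and $\dim_k \overline T/\overline K<\prod_{i\ge 2}a_i$. Thus the bound $HF(\overline T/\overline I,i)\ge HF(\overline T/\overline K,i)$ is too weak in degrees $i\ge a_2$, and your displayed chain of inequalities fails at the last step. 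A concrete failure: take $n=3$, $a_1=a_2=a_3=2$. For general $\ell$ one computes $\dim_k \overline T/\overline K=3$, and your argument (even with the low-degree correction, which here contributes $1+2=3$ while the range $a_2\le i\le\tau$ contributes $0$) yields only $s\ge 3$, not the required $s\ge 4$. The ``general change of coordinates'' you suggest cannot simultaneously make $\ell$ a coordinate hyperplane and keep $K$ monomial.

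The paper avoids this by taking $\ell=y_1$ \emph{specifically}, so that $K+(y_1)=J=(y_1,y_2^{a_2},\dots,y_n^{a_n})$ is exactly the complete intersection of Lemma~\ref{HFlemma}. The price is that $y_1$ need not avoid the points, so one cannot use the Artinian reduction directly. Instead the paper uses the multiplication-by-$y_1$ exact sequence on $T/I$ when $y_1$ is a nonzerodivisor, telescoping $h_\tau-h_{a_2-1}\ge\sum_{i=a_2}^{\tau}HF(T/J,i)$; and when $y_1$ \emph{is} a zero divisor it replaces $I$ by the colon ideal $I'=I:(y_1)\subset K:(y_1)=(y_1^{a_1-1},y_2^{a_2},\dots,y_n^{a_n})$, on which $y_1$ is now a nonzerodivisor. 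This colon-ideal reduction is the missing idea in your proposal, and it is exactly where the hypothesis $a_1\ge 2$ enters.
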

\begin{proof}
We consider two cases depending on whether or not $y_1$ is a zero
divisor in $T/I$. The condition $2\leq a_1$ is needed for
the latter, while the former also works for $a_1=1$.

Suppose that $y_1$ is not a zero divisor in $T/I$.  Consider the short
exact sequence
\[0\longrightarrow {T \over I }(-1) \stackrel{y_1}{\longrightarrow} {T \over I } \longrightarrow {T \over I+(y_1) } \longrightarrow 0.\]
Set $h_i=HF(T/I,i)$ and $J=(y_1,y_2^{a_2},\ldots,y_n^{a_n})$ as in Lemma 2.2.
Clearly
\[ I \subset I + (y_1) \subset K + (y_1) = J \]
and hence
\[h_i-h_{i-1}=HF\left({T \over I+(y_1)},i\right)\geq HF\left({T\over J},i\right).\]

Now, as in Lemma \ref{HFlemma} we set $\tau=(\sum_2^na_i)-(n-1)$.    Using Lemma \ref{HFlemma} and the inequality we just found on $h_i - h_{i_1}$, we compute a bound on $h_\tau$ as follows:
\[h_\tau\geq h_{\tau-1}+HF\left({T\over J},\tau\right)\geq \ldots \geq h_{a_2-1} + \sum_{i=a_2}^\tau HF\left({T\over J},i\right).\]

As $y_1$ is not a zero divisor, we conclude that $I_{a_2-1}=(0)$ and hence $h_{a_2 -1} = {a_2 -1 + n-1 \choose n-1}$.  Thus Lemma \ref{HFlemma} yields $\prod_2^n
a_i\leq h_{\tau}\leq s$ as we wanted to show.

Now suppose that $y_1$ is a zero divisor on $T/I$.   Consider the ideal $I^\prime = I:(y_1) \supset I$.  Since $I$ was a one-dimensional radical ideal the same is true for $I^\prime$.  Since $T/I^\prime$ is a homomorphic image of $T/I$ the multiplicity of $T/I^\prime$ (which we will denote by $s^\prime$) satisfies $s^\prime \leq s$.  Notice now that $I^\prime : (y_1) = I^\prime$ (again since $I$ is radical) and so $y_1$ is not a zero divisor on $T/I^\prime $.

Since
$$
I^\prime = I:(y_1) \subseteq K:(y_1) = (K^\prime) = (y_1^{a_2-1}, y_2^{a_2}, \ldots , y_n^{a_n} )
$$
we can apply the first part of the argument to $I^\prime \subset K^\prime$ (we are using that $a_1\geq 2$ in this case) and so we obtain that
$$
s^\prime \geq \prod_{i=2}^n a_i
$$
and that completes the proof of the Theorem.
\end{proof}

\begin{rem}\label{radicalREM}{
We notice that the hypothesis of $I$ being radical is necessary. In fact, $I=(y_1^{a_1},\ldots,y_{n-1}^{a_{n-1}})\subset K$ and $I$ is a one dimensional saturated ideal of multiplicity $\Pi_1^{n-1}a_i$. In particular, if $a_1<a_n$ the multiplicity of $I$ is less than $\Pi_2^{n}a_i$.
}\end{rem}

We now provide some applications of Theorem \ref{mainthm}.

\subsection{Rank of monomials}

We first deal with the sum of powers decomposition of monomials.
\begin{cor}\label{monmialsumofpoercorol}
For integers $m>1$ and $1\leq b_1\leq \ldots \leq b_m$ the
monomial
\[x_1^{b_1}\cdot\ldots\cdot x_m^{b_m}\]
is the sum of $\prod_{i=2}^m(b_i+1)$ power of linear forms and no
fewer.
\end{cor}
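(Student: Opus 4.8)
The plan is to use the Apolarity Lemma \ref{apolarityLEMMA} to convert the problem into a statement about ideals of reduced points, and then invoke Theorem \ref{mainthm} for the lower bound together with an explicit construction for the upper bound. Write $M = x_1^{b_1}\cdots x_m^{b_m}$ with $1 \leq b_1 \leq \cdots \leq b_m$, and set $d = b_1 + \cdots + b_m = \deg M$. First I would compute the apolar ideal $M^\perp \subset T = k[y_1,\ldots,y_m]$: a standard computation (see \cite{IaKa}) gives $M^\perp = (y_1^{b_1+1}, \ldots, y_m^{b_m+1})$, a complete intersection. By Remark \ref{leastvarREM}, since every variable genuinely appears in $M$ (as $b_1 \geq 1$), it suffices to work in $T = k[y_1,\ldots,y_m]$, so that $\mathrm{rk}(M)$ is exactly the least $s$ for which $M^\perp$ contains the ideal of $s$ distinct points in $\PP^{m-1}$.

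For the lower bound: any ideal $I$ of $s$ distinct points with $I \subset M^\perp = (y_1^{b_1+1},\ldots,y_m^{b_m+1})$ is a one-dimensional radical ideal of multiplicity $s$. Applying Theorem \ref{mainthm} with $a_i = b_i + 1$ (note $a_i \geq 2$ since $b_i \geq 1$, and the ordering $a_1 \leq \cdots \leq a_m$ matches $b_1 \leq \cdots \leq b_m$) yields $s \geq \prod_{i=2}^m a_i = \prod_{i=2}^m (b_i + 1)$. Hence $\mathrm{rk}(M) \geq \prod_{i=2}^m (b_i+1)$.

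For the upper bound I would exhibit an explicit ideal of exactly $\prod_{i=2}^m(b_i+1)$ reduced points contained in $M^\perp$. The natural candidate is the ``grid'' $I = (y_2^{b_2+1} - \lambda\, y_1^{b_2+1}\ \text{type factors})$ — more precisely, for each $i \in \{2,\ldots,m\}$ choose $b_i + 1$ distinct $(b_i+1)$-th roots of unity $\zeta_{i,0},\ldots,\zeta_{i,b_i}$ and let the points be $[1 : \zeta_{2,j_2} : \cdots : \zeta_{m,j_m}]$ as $(j_2,\ldots,j_m)$ ranges over $\prod_{i=2}^m\{0,\ldots,b_i\}$. These are $\prod_{i=2}^m(b_i+1)$ distinct points, and their ideal is generated by $y_i^{b_i+1} - y_1^{b_i+1}$ for $i = 2,\ldots,m$; one checks each such generator, and hence the whole ideal, annihilates $M$ under the differential action (since $y_i^{b_i+1} M = 0$ and $y_1^{b_i+1} M = 0$ because $b_i + 1 > b_i \geq b_1$). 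By Apolarity this gives a decomposition of $M$ into $\prod_{i=2}^m(b_i+1)$ powers of linear forms, so $\mathrm{rk}(M) \leq \prod_{i=2}^m(b_i+1)$.

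The main obstacle is the lower bound, and that work has already been absorbed into Theorem \ref{mainthm}; given that theorem, the remaining delicate point is simply verifying that the explicit point configuration above is both reduced (clear) and apolar to $M$ (a short computation), and confirming the precise form of $M^\perp$. One should also double-check the degenerate-looking boundary case $b_1 = b_2 = \cdots$, but the argument is uniform and no separate treatment is needed.
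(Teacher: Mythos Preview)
Your proposal is correct and follows essentially the same route as the paper: compute $M^\perp=(y_1^{b_1+1},\ldots,y_m^{b_m+1})$, apply Theorem \ref{mainthm} with $a_i=b_i+1$ for the lower bound, and use the complete intersection ideal $(y_i^{b_i+1}-y_1^{b_i+1})_{i=2}^m$ of $\prod_{i=2}^m(b_i+1)$ reduced points for the upper bound. Your write-up adds a little more detail (the explicit root-of-unity description of the points and the check that each generator lies in $M^\perp$), but the argument is the same.
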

\begin{proof}
Using Remark \ref{leastvarREM} it is enough to consider the case $n=m$.  So, we can assume our monomial is $M = x_1^{b_1}\cdot\ldots\cdot x_n^{b_m}\in k[x_1,\ldots,x_n]$ and we set $\sigma=\prod_{i=2}^n(b_i+1)$. Applying the Apolarity Lemma to $M$ we find that the perp ideal of $M$ is
\[K=(y_1^{b_1+1},\ldots,y_n^{b_n+1}).\]

Let $I\subset K$ be an ideal of $s$ distinct points, i.e. a one dimensional radical ideal in $K$ of multiplicity $s$.
Applying Theorem \ref{mainthm} we get $s\geq \sigma$ and hence $M$
cannot be the sum of fewer than $\sigma$ powers of linear forms.

It remains to show that $M$ is the sum of $\sigma$ powers of linear
forms, i.e. we need to find an ideal of $\sigma$ distinct points inside $K$.  But it is easy to verify that
the ideal
$$
(y_2^{b_2+1} - y_1^{b_2+1}, y_3^{b_3+1} - y_1^{b_3+1}, \ldots , y_n^{b_n+1} - y_1^{b_n+1} )
$$
is such an ideal.
\end{proof}

It is easy to use the previous result to compute the rank of a generalized version of monomials.

\begin{cor}
For integers  $1\leq b_1\leq \ldots \leq b_m$ and for linearly
independent linear forms $L_1,\ldots,L_n$ consider the form
\[F=L_1^{b_1}\cdot\ldots\cdot L_n^{b_n}.\]
Then
\[\mathrm{rk}(F)=\Pi_{i=2}^n(b_i+1).\]
\end{cor}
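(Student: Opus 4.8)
The statement to prove is that for linearly independent linear forms $L_1,\dots,L_n$ and integers $1\le b_1\le\dots\le b_n$, the form $F=L_1^{b_1}\cdots L_n^{b_n}$ has $\mathrm{rk}(F)=\prod_{i=2}^n(b_i+1)$.

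The plan is to reduce this to the already-established Corollary for the monomial case by a linear change of coordinates. Since $L_1,\dots,L_n$ are linearly independent linear forms in $S=k[x_1,\dots,x_n]$, they form part of a basis, and because there are $n$ of them in an $n$-dimensional space they are actually a basis of $S_1$. Hence there is an invertible linear map $\varphi\colon S\to S$ (a graded $k$-algebra automorphism) sending $x_i\mapsto L_i$ for each $i$, so that $\varphi(x_1^{b_1}\cdots x_n^{b_n})=L_1^{b_1}\cdots L_n^{b_n}=F$. First I would record that such a $\varphi$ exists and is an isomorphism of the graded ring $S$.

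Next I would invoke the coordinate-invariance of Waring rank: if $\varphi$ is an invertible linear change of variables and $G$ is any form, then $\mathrm{rk}(\varphi(G))=\mathrm{rk}(G)$. Indeed, any decomposition $G=\sum_{i=1}^s \ell_i^d$ yields $\varphi(G)=\sum_{i=1}^s \varphi(\ell_i)^d$ with the $\varphi(\ell_i)$ again linear forms (and pairwise linearly independent iff the $\ell_i$ were, since $\varphi$ is injective), and conversely applying $\varphi^{-1}$ turns a decomposition of $\varphi(G)$ back into one of $G$. This gives $\mathrm{rk}(F)=\mathrm{rk}(\varphi(x_1^{b_1}\cdots x_n^{b_n}))=\mathrm{rk}(x_1^{b_1}\cdots x_n^{b_n})$, and the latter equals $\prod_{i=2}^n(b_i+1)$ by Corollary \ref{monmialsumofpoercorol}.

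There is essentially no obstacle here: the only points requiring a word of care are that the $L_i$, being $n$ linearly independent forms in the $n$-dimensional space $S_1$, genuinely span $S_1$ (so $\varphi$ is well-defined and invertible), and that the notion of rank in Lemma \ref{apolarityLEMMA} — with the $L_i$ pairwise \emph{linearly independent} — is preserved under $\varphi$, which holds because an invertible linear map sends linearly independent vectors to linearly independent vectors. One should also note that the ambient number of variables is exactly $n$ here (as in Remark \ref{leastvarREM}, working in the smallest polynomial ring containing $F$), so no extra variables intervene. Thus the corollary follows immediately from Corollary \ref{monmialsumofpoercorol} and the invariance of Waring rank under invertible linear substitutions.
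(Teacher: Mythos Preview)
Your argument is correct and is precisely the approach the paper has in mind: the corollary is stated immediately after Corollary~\ref{monmialsumofpoercorol} with the remark that ``it is easy to use the previous result,'' and no separate proof is given. Your reduction via an invertible linear change of coordinates (together with Remark~\ref{leastvarREM} to handle any extra ambient variables) is exactly the intended one-line justification.
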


\subsection{On the rank of the sum of relatively coprime monomials}

It is possible to use Theorem \ref{mainthm} to give bounds on the
rank of forms more general than monomials. For example, we have
the following
\begin{cor}\label{coprimemonomialscor}
Let $F$ be a form of degree $d$ which can be written
$$
F=M_1+\ldots +M_r
$$
where the $M_i$ are monomials of degree $d$ such that
$\mathrm{GCD}(M_i,M_j)=1$ for $i\neq j$. Then one has
\[\mathrm{rk}(M_i)\leq\mathrm{rk}(F)\]
for $i=1,\ldots,r$.

Moreover, if we let $M=\prod_{i=1}^r M_i$,  then
\[\mathrm{rk}(F)\leq\mathrm{rk}(M).\]
\end{cor}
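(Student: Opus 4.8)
The plan is to prove the two inequalities separately, both via the Apolarity Lemma \ref{apolarityLEMMA}. For the first inequality $\mathrm{rk}(M_i)\leq\mathrm{rk}(F)$, I would argue by a specialization / restriction trick analogous to Remark \ref{leastvarREM}. Since $\mathrm{GCD}(M_i,M_j)=1$ for $i\neq j$, the monomials $M_1,\dots,M_r$ are supported on pairwise disjoint sets of variables. Fix $i$ and suppose $F=\sum_{j=1}^{s}L_j^d$ with $s=\mathrm{rk}(F)$; I would substitute $0$ for all the variables \emph{not} appearing in $M_i$. On the left side this kills every $M_j$ with $j\neq i$ (each such monomial involves a variable disjoint from those of $M_i$, hence a variable we are setting to zero) and leaves $M_i$ untouched. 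On the right side each $L_j$ restricts to some linear form $\widetilde{L_j}$ in the variables of $M_i$. Hence $M_i=\sum_{j=1}^s \widetilde{L_j}^{\,d}$, which exhibits $M_i$ as a sum of $s$ $d$-th powers, so $\mathrm{rk}(M_i)\leq s=\mathrm{rk}(F)$.

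For the second inequality $\mathrm{rk}(F)\leq \mathrm{rk}(M)$ with $M=\prod_{i=1}^r M_i$, I would work on the apolar side. Write $M=x_1^{b_1}\cdots x_n^{b_n}$ (after reindexing, the exponents $b_k$ are the total exponents appearing across all the $M_i$, which is consistent since the $M_i$ have disjoint support). By the Apolarity Lemma applied to $M$ as in Corollary \ref{monmialsumofpoercorol}, $M^\perp = (y_1^{b_1+1},\dots,y_n^{b_n+1})$ and the explicit ideal
\[
I=(y_2^{b_2+1}-y_1^{b_2+1},\; y_3^{b_3+1}-y_1^{b_3+1},\;\ldots,\; y_n^{b_n+1}-y_1^{b_n+1})
\]
is the ideal of $\mathrm{rk}(M)=\prod_{k=2}^n(b_k+1)$ distinct points contained in $M^\perp$. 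The key observation is that $M^\perp\subseteq M_i^\perp$ for every $i$: indeed $M_i$ divides $M$ in each of its variables with a smaller-or-equal exponent, so $M_i^\perp$ is generated by pure powers $y_k^{c_k+1}$ with $c_k+1\leq b_k+1$, all of which annihilate $M$ as well — more directly, any differential operator killing the "larger" monomial $M$ in fact kills the divisor $M_i$, hence $M^\perp \subseteq M_i^\perp$. Therefore $I\subseteq M_i^\perp$ for all $i$, and consequently $I\subseteq \bigcap_{i=1}^r M_i^\perp = (M_1+\cdots+M_r)^\perp = F^\perp$, using that $(\sum_i M_i)^\perp = \bigcap_i M_i^\perp$ because $\partial(\sum_i M_i)=\sum_i \partial M_i$ is a sum of polynomials in pairwise disjoint variable sets, hence vanishes iff each summand does. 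Thus $I$ is an ideal of $\mathrm{rk}(M)$ distinct points inside $F^\perp$, and the Apolarity Lemma gives $\mathrm{rk}(F)\leq \mathrm{rk}(M)$.

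The step I expect to require the most care is the identity $(M_1+\cdots+M_r)^\perp=\bigcap_i M_i^\perp$ — one inclusion is trivial, and the reverse uses crucially that the $M_i$ live in disjoint sets of variables so that no cancellation can occur among the $\partial M_i$; I would spell this out by noting that $\partial M_i$ is a scalar multiple of a monomial in the variables of $M_i$ (times possibly other variables depending on the shape of $\partial$), and distinct $i$ contribute monomials that can never coincide. The first inequality's substitution argument is routine, and the containment $M^\perp\subseteq M_i^\perp$ is immediate from the description of perp ideals of monomials, so the disjoint-support bookkeeping for the intersection formula is really the only delicate point.
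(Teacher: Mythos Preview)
Your argument is correct and, for the lower bound $\mathrm{rk}(M_i)\leq\mathrm{rk}(F)$, takes a genuinely different route from the paper. The paper establishes $F^\perp=\bigcap_i M_i^\perp$ and uses the inclusion $F^\perp\subseteq M_i^\perp$ together with the Apolarity Lemma to conclude; you instead specialize, setting the variables outside the support of $M_i$ to zero, which is more elementary and bypasses apolarity for this half entirely. For the upper bound both proofs come down to $M^\perp\subseteq F^\perp$: the paper states this in one line, while you route through $M^\perp\subseteq\bigcap_i M_i^\perp\subseteq F^\perp$, using only the trivial inclusion at the last step.

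One point worth tidying: the full equality $F^\perp=\bigcap_i M_i^\perp$ that you (and the paper) assert actually fails in degree $d$. For $F=x_1^d+x_2^d$ the operator $y_1^d-y_2^d$ lies in $F^\perp$ but not in $(x_1^d)^\perp$; your ``disjoint variable sets, hence no cancellation'' justification breaks precisely here because each $\partial M_i$ is a constant. This does not harm your proof, since for the upper bound you only need the trivial containment $\bigcap_i M_i^\perp\subseteq F^\perp$, and your substitution argument for the lower bound is independent of the identity --- in fact your approach to the lower bound neatly sidesteps this degree-$d$ subtlety in the paper's apolar route. A separate minor slip: your first sentence justifying $M^\perp\subseteq M_i^\perp$ is phrased backwards (showing that generators of $M_i^\perp$ annihilate $M$ would give the opposite inclusion and is false anyway); the ``more directly'' clause that follows it is the correct argument.
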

\begin{proof}
Clearly $F^\perp\supseteq M_1^\perp\cap\ldots\cap M_r^\perp$.
Given $\partial\in F^\perp$, we notice that the non-zero monomial
of the form $\partial\circ M_i$ are linearly independent as they
are pairwise coprime. Hence we have
\[F^\perp= M_1^\perp\cap\ldots\cap M_r^\perp.\]
In particular, $F^\perp\subseteq M_i^\perp,i=1,\ldots,r$ and then
$\mathrm{rk}(M_i)\leq\mathrm{rk}(F)$.

It is straightforward to notice that
\[M^\perp\subseteq F^\perp\]
and thus $\mathrm{rk}(F)\leq\mathrm{rk}(M)$.  That completes the proof.
\end{proof}

%

\subsection{On the rank of the generic form}

It is well known, see \cite{AH95}, that for the generic degree $d$
form in $n+1$ variables $F$ one has
\[\mathrm{rk}(F)=\left\lceil{{d+n\choose d}\over n+1}\right\rceil.\]
However, the rank for a given specific form can be bigger or
smaller than that number. Moreover, it is trivial to show that every form of degree $d$ is a sum of ${d+n\choose d}$ $d^{th}$ powers of linear forms.  But, in
general, it is not known how big the rank of a degree $d$ form can be.

Using the monomials we can try to produce explicit examples of
forms having rank bigger than that of the generic form. We
give a complete description of the situation for the case of three variables.

\begin{cor} Let $n=3$ and $d>2$ be an integer. Then
\[
\mbox{max}\left\{\mbox{rk}(M): M \in S_d\mbox{ is a
monomial}\right\}= \left\{\begin{array}{ll}\left({d+1\over
2}\right)^2 & d\mbox{ is odd} \\ \\ {d\over 2}\left({d\over
2}+1\right) & d\mbox{ is even}\end{array}\right.
\]
and this number is asymptotically ${3\over 2}$ of the rank of the
generic degree $d$ form in three variables.
\end{cor}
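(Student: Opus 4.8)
The plan is to use Corollary~\ref{monmialsumofpoercorol} to reduce the computation of $\max\{\mathrm{rk}(M) : M \in S_d \text{ a monomial}\}$ to an optimization problem over integer partitions. By that corollary, for a monomial $M = x_1^{b_1} x_2^{b_2} x_3^{b_3}$ of degree $d$ with $1 \leq b_1 \leq b_2 \leq b_3$ we have $\mathrm{rk}(M) = (b_2+1)(b_3+1)$, and since $d > 2$ we may always assume all three exponents are at least $1$ (if $M$ genuinely involves fewer than three variables, Remark~\ref{leastvarREM} shows its rank is governed by the two-variable or one-variable formula, which is never larger). So first I would set up the problem: maximize $(b_2+1)(b_3+1)$ subject to $b_1 + b_2 + b_3 = d$, $1 \leq b_1 \leq b_2 \leq b_3$, and observe that to make the product large one wants $b_1$ as small as possible, hence $b_1 = 1$, reducing to maximizing $(b_2+1)(b_3+1)$ with $b_2 + b_3 = d-1$ and $1 \leq b_2 \leq b_3$.

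Next I would solve this two-variable problem by the standard AM--GM heuristic made rigorous over the integers: with $b_2 + b_3 = d-1$ fixed, the product $(b_2+1)(b_3+1)$ is maximized when $b_2+1$ and $b_3+1$ are as equal as possible, i.e. when $\{b_2+1, b_3+1\}$ is the most balanced pair summing to $d+1$. If $d$ is odd, then $d+1$ is even and the optimum is $b_2 + 1 = b_3 + 1 = (d+1)/2$, giving $\left(\frac{d+1}{2}\right)^2$; one should check this is achievable, i.e. that $b_2 = b_3 = (d-1)/2 \geq 1$, which holds since $d > 2$. If $d$ is even, then $d+1$ is odd and the balanced split is $\{d/2, d/2+1\}$, giving $\frac{d}{2}\left(\frac{d}{2}+1\right)$, realized by $(b_1,b_2,b_3) = (1, d/2-1, d/2)$ — again one checks $d/2 - 1 \geq 1$, i.e. $d \geq 4$, and handles $d=2$ separately (excluded by hypothesis, though the case $d=4$ deserves a glance since $b_1=b_2=1$ there). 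A short convexity argument — moving a unit from the larger of $b_2,b_3$ to the smaller strictly increases the product unless they already differ by at most one — makes the optimality clean without case-chasing.

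Finally I would compare with the rank of the generic ternary form. By \cite{AH95}, the generic degree $d$ form in $3$ variables has rank $\left\lceil \binom{d+2}{2}/3 \right\rceil = \left\lceil \frac{(d+1)(d+2)}{6} \right\rceil$, which is asymptotically $\frac{d^2}{6}$. The maximal monomial rank is asymptotically $\frac{d^2}{4}$ in both parity cases, and $\frac{d^2/4}{d^2/6} = \frac{3}{2}$, giving the asserted ratio; I would phrase this as a limit of the ratio as $d \to \infty$.

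The only real subtlety — not so much an obstacle as a point requiring care — is making sure the reduction to $b_1 = 1$ and then to the balanced two-part split is genuinely an \emph{optimization} and not just a plausible guess: one must verify that no configuration with $b_1 \geq 2$ can beat the best configuration with $b_1 = 1$. This follows because increasing $b_1$ by $1$ forces decreasing $b_2 + b_3$ by $1$, and $(b_2+1)(b_3+1)$ with smaller sum and still-balanced split is strictly smaller; so the whole argument is a single monotonicity-plus-convexity observation, and the edge cases are just the small values of $d$ where some exponent is forced to equal $1$.
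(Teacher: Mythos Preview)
Your proposal is correct and follows essentially the same strategy as the paper: apply the monomial rank formula from Corollary~\ref{monmialsumofpoercorol}, reduce to an integer optimization problem, argue that $b_1=1$ and a balanced split of $b_2,b_3$ is optimal, and then compare the resulting quadratic in $d$ with the generic rank $\sim d^2/6$. If anything, you are more careful than the paper in two places: you explicitly dispose of monomials involving fewer than three variables via Remark~\ref{leastvarREM}, and you give a clean monotonicity/convexity justification for why $b_1=1$ together with the balanced split is optimal, whereas the paper simply asserts these facts after a geometric remark about hyperbolas.
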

\begin{proof}
We consider monomials $x_1^{b_1}x_2^{b_2}x_3^{b_3}$ with the
conditions $b_1\leq b_2\leq b_3$,
\[b_1+b_2+b_3=d\]
and we want to maximize the function $f(b_2,b_3)=(b_2+1)(b_3+1)$.
Considering $b_1$ as a parameter we are reduced to an optimization
problem in the plane where the constraint is given by a segment
and the target function is the branch of an hyperbola. For any
given $b_1$, it is easy to see that the maximum is achieved when
$b_2$ and $b_3$ are as close as possible to ${d-b_1 \over 2}$.
Also, when $b_1=1$ we get the maximal possible value. In
conclusion $\mathrm{rk}(M)$ is maximal for the monomial
\[ M=x_1x_2^{d-1\over 2}x_3^{d-1\over 2} (d \mbox{ odd}) \mbox{ or } M=x_1x_2^{d \over 2}x_3^{{d\over 2} - 1} (d\mbox{ even}).\]
Writing $d=6p+q$, with $0\leq q \leq 5$, and computing one easily
sees that the rank of the generic forms is asymptotically $6p^2$.
While the maximal rank of a degree $d$ monomial is asymptotically
$9p^2$ and the conclusion follows.

\end{proof}

\begin{rem} For $n=3$, we compare the behavior of the generic form and of the
biggest rank monomials in the following table
\[\begin{array}{c|c|c}
d & \mathrm{rk}(\mbox{ generic degree $d$ form}) &
\mathrm{max}_M\mathrm{rk}(M) \\ \hline
3 & 4 & 4 \\
4 & 5 & 6 \\
5 & 7 & 9 \\
6 & 10 & 12 \\
7 & 12 & 16
\end{array}\]
\end{rem}

\subsection{Sum of powers decomposition for monomials}

Since we now know the rank of any given monomial $M$, we can try to give a
  description of one of its sum of powers decompositions.

\begin{cor}\label{sumofpowerdecmon} For integers  $1\leq b_1\leq \ldots \leq b_m$ consider the monomial
\[M=x_1^{b_1}\cdot\ldots\cdot x_n^{b_n}.\]
Then
\[M=\sum_{j=1}^{\mathrm{rk}(M)}\left[\gamma_j \left(x_1+\epsilon_j(2)x_2+\ldots +\epsilon_j(n)x_n\right)\right]^d\]
where $\epsilon_1(i)\ldots,\epsilon_{\mathrm{rk}(M)}(i)$ are the
$(b_i+1)$-th roots of $1$, each repeated $\Pi_{j\neq i,1}(b_i+1)$
times, and the $\gamma_j$ are scalars.
\end{cor}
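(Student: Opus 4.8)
The plan is to exhibit an explicit set of $\mathrm{rk}(M)$ distinct points in $\PP^{n-1}$ whose defining ideal lies inside $M^\perp$, and then invoke the Apolarity Lemma (Lemma \ref{apolarityLEMMA}) to read off the claimed power-sum decomposition. By Remark \ref{leastvarREM} we may work in $k[x_1,\ldots,x_n]$ with $d=b_1+\cdots+b_n$, and by the computation in Corollary \ref{monmialsumofpoercorol} we have $M^\perp=K=(y_1^{b_1+1},\ldots,y_n^{b_1+1})$ and $\mathrm{rk}(M)=\prod_{i=2}^n(b_i+1)$. The natural candidate is the set of points $P_{\boldsymbol\zeta}=[1:\zeta_2:\cdots:\zeta_n]$ where, for each $i=2,\ldots,n$, $\zeta_i$ ranges independently over the $(b_i+1)$-th roots of unity; this gives exactly $\prod_{i=2}^n(b_i+1)$ distinct points, the first coordinate being pinned to $1$. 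The ideal of this finite set is $(y_2^{b_2+1}-y_1^{b_2+1},\ldots,y_n^{b_n+1}-y_1^{b_n+1})$, which is visibly contained in $K$ since each generator is a difference of two generators of $K$ — this is precisely the ``easy to verify'' ideal already used in the proof of Corollary \ref{monmialsumofpoercorol}.

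The second step is to translate the Apolarity Lemma's existence statement into the explicit formula. The linear form dual to the point $P_{\boldsymbol\zeta}$ is $L_{\boldsymbol\zeta}=x_1+\zeta_2 x_2+\cdots+\zeta_n x_n$, and the lemma guarantees scalars $\gamma_{\boldsymbol\zeta}$ with $M=\sum_{\boldsymbol\zeta}(\gamma_{\boldsymbol\zeta}L_{\boldsymbol\zeta})^d$. Re-indexing the product set of roots of unity by a single index $j=1,\ldots,\mathrm{rk}(M)$, one sees that as $j$ runs through all tuples, each fixed $(b_i+1)$-th root of unity appears as the $i$-th coordinate exactly $\prod_{\ell\neq i,\,\ell\neq 1}(b_\ell+1)$ times, which is exactly the multiplicity asserted in the statement. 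This bookkeeping step is purely combinatorial and gives the description of the $\epsilon_j(i)$.

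To make the statement self-contained one should also verify directly that such $\gamma_j$ exist, rather than merely citing Lemma \ref{apolarityLEMMA} as a black box — equivalently, that $M$ actually lies in the span of $\{L_{\boldsymbol\zeta}^d\}$. The cleanest route is a direct expansion: write $L_{\boldsymbol\zeta}^d=\sum_{|\boldsymbol\alpha|=d}\binom{d}{\boldsymbol\alpha}\zeta_2^{\alpha_2}\cdots\zeta_n^{\alpha_n}x^{\boldsymbol\alpha}$, sum against coefficients $c_{\boldsymbol\zeta}$, and use the orthogonality relation $\sum_{\zeta^{b_i+1}=1}\zeta^{\,k}=(b_i+1)$ if $(b_i+1)\mid k$ and $0$ otherwise. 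Choosing the $c_{\boldsymbol\zeta}$ to be suitable characters (or just taking $c_{\boldsymbol\zeta}=\prod_i\overline{\zeta_i}^{\,b_i}$ up to a scalar) kills every monomial $x^{\boldsymbol\alpha}$ with $\boldsymbol\alpha\neq(b_1,\ldots,b_n)$, because some exponent $\alpha_i$ with $i\geq 2$ fails to be $\equiv b_i\pmod{b_i+1}$ unless $\alpha_i=b_i$ for all $i\geq 2$, which forces $\alpha_1=b_1$ as well since the degree is fixed. The surviving term is a nonzero multiple of $M$, and absorbing the resulting constant into $\gamma_j$ via a $d$-th root yields the formula.

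I expect the main obstacle to be the last step: one must check that the orthogonality argument really isolates the single monomial $M$ and that no other multi-index survives — this needs the observation that fixing $\alpha_i\equiv b_i\pmod{b_i+1}$ with $0\le\alpha_i\le d$ does \emph{not} immediately force $\alpha_i=b_i$ in general, so one must use the degree constraint $\sum\alpha_i=d=\sum b_i$ together with the inequalities $b_i\le b_n$ to rule out the extra solutions. Once that is pinned down, everything else (distinctness of the points, containment in $K$, the combinatorial count of the $\epsilon_j(i)$) is routine.
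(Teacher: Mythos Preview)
Your proposal is correct and follows essentially the same approach as the paper: both choose the ideal $I=(y_2^{b_2+1}-y_1^{b_2+1},\ldots,y_n^{b_n+1}-y_1^{b_n+1})\subset M^\perp$, read off the points $[1:\epsilon(2):\cdots:\epsilon(n)]$, and invoke the Apolarity Lemma (via \cite[Lemma~1.15]{IaKa}) to obtain the decomposition with undetermined scalars $\gamma_j$. Your third paragraph --- the direct orthogonality computation --- goes beyond what the paper does (the paper simply cites \cite{IaKa}), and your anticipated obstacle is handled exactly as you outline: if $\alpha_i\equiv b_i\pmod{b_i+1}$ and $\alpha_i\ge 0$ then $\alpha_i\ge b_i$, so $\sum_{i\ge 2}\alpha_i\ge\sum_{i\ge 2}b_i$ forces $\alpha_1\le b_1$, and any strict inequality $\alpha_j>b_j$ for some $j\ge 2$ gives $\alpha_j\ge 2b_j+1$, whence $\alpha_1\le b_1-(b_j+1)<0$; one small slip to fix is that $y_i^{b_i+1}-y_1^{b_i+1}$ is not literally a difference of two \emph{generators} of $K$ when $b_i>b_1$, but rather $y_1^{b_i+1}=y_1^{b_i-b_1}\cdot y_1^{b_1+1}\in K$.
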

\begin{proof}

Another consequence of \cite[Lemma 1.15]{IaKa} allows
one to write a form as a sum of powers of linear forms.
If $I\subset M^\perp$ is an ideal of $s$ points, then
\[M=\sum_{j=1}^{s}\gamma_j \left(\alpha_j(1)x_1+\alpha_j(2)x_2+\ldots +\alpha_j(n)x_n\right)^d\]
where the $\gamma_j$ are scalars and $[\alpha_1:\ldots:\alpha_n]$
are the coordinates of the points having defining ideal $I$. Given
$M$ we can choose the following ideal of points
\[I=(y_2^{b_2+1} - y_1^{b_2+1}, y_3^{b_3+1} - y_1^{b_3+1}, \ldots , y_n^{b_n+1} - y_1^{b_n+1} )
.\]

It is straightforward to see that the points defined by $I$ have
coordinates
\[[1:\epsilon(2):\ldots:\epsilon(n)]\]
where $\epsilon(i)$ is a $(b_1+1)$-th root of $1$. Taking all
possible combinations of the roots of $1$ we get the desired
$\Pi_{i=2}^n(b_i+1)$ points and the result follows.

\end{proof}

To find an explicit decomposition for a given monomial is then
enough to solve a linear system of equation to determine the
$\gamma_j$. For example, in the very simple case of $M=x_0x_1x_2$,
we only deal with square roots of $1$ and we get:
\[x_0x_1x_2={1\over 24}(x_0+x_1+x_2)^3-{1\over 24}(x_0+x_1-x_2)^3-{1\over 24}(x_0-x_1+x_2)^3+{1\over 24}(x_0-x_1-x_2)^3.\]

%
%

\subsection{The variety of reducible forms}

Finally we give an application to the study of the variety of
forms which factor in a prescribed way. These varieties were first
introduced by Mammana in \cite{Mamma}.  They were also studied in some
detail in  \cite{CaChGe} and some particular examples considered in
  \cite{arrondobernardi}. More precisely, given
integers $d$ and $n$ we consider a partition $\lambda \vdash d$ , $\lambda = (d_1, d_2, \cdots, d_r)$ and define  the variety
\[\mathbb{X}_\lambda=\left\{ [F_{d_1}\cdot\ldots\cdot F_{d_r}] : F_{d_i}\in S_{d_i}\right\}\]
parameterizing forms of degree $d$ in $n$ variables factoring as a
product of forms of degree $d_i$. Let $V_{n,d}$ denote the
Veronese variety parameterizing $d$-th power of linear forms in
$n$ variables. We use $\sigma_r(V_{n,d})$ to denote the closure of the set of points on secant $\PP^{r-1}$'s to $V_{n,d}$.
\begin{cor}
For integers $n>1$ and $1\leq d_1\leq \ldots \leq d_n$ we have
\[\mathbb{X}_\lambda\subset\sigma_r(V_{n,d}),\]
for $r=\prod_2^n (d_i+1)$
\end{cor}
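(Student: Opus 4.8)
The plan is to reduce the statement about the variety $\XX_\lambda$ of reducible forms to the rank computation for products of linear forms. The key observation is that a general point of $\XX_\lambda$ corresponds to a form $F = F_{d_1}\cdots F_{d_r}$ with the $F_{d_i}$ general of degree $d_i$, and since $\sigma_r(V_{n,d})$ is closed it suffices to show that a dense subset of $\XX_\lambda$ lies in $\sigma_r(V_{n,d})$; equivalently, that $\mathrm{rk}(F) \le r = \prod_{i=2}^n(d_i+1)$ for $F$ in that dense subset. (Note the partition here has exactly $n$ parts, matching the number of variables.)

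The first step is to pick coordinates so that a general choice of the factors becomes a product of powers of the coordinate linear forms. Since a general collection of $n$ forms of positive degrees... wait — the $F_{d_i}$ are forms of degree $d_i$, not linear forms. So the right move is slightly different: I would instead argue that it is enough to bound the rank on the subvariety where each $F_{d_i}$ is a power of a linear form, i.e. $F = L_1^{d_1}\cdots L_n^{d_n}$ with $L_i$ general linear forms, because such forms are dense in... no, they are not dense in $\XX_\lambda$. Let me reconsider: the cleanest route is to use semicontinuity of rank only in the sense of secant varieties, namely that $\{[G] : \mathrm{rk}(G) \le r\}$ has closure $\sigma_r(V_{n,d})$. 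So I want: for \emph{every} point $[F_{d_1}\cdots F_{d_r}] \in \XX_\lambda$ (here I will use $r$ for the number of parts, which equals $n$), we have $[F] \in \sigma_r(V_{n,d})$. For this it suffices to exhibit a flat family degenerating a general such $F$ to a monomial of the same multidegree $(d_1,\ldots,d_n)$: take $F_{d_i} = x_i^{d_i} + t(\text{other terms})$... but actually the simplest correct argument is just: every $F_{d_i}$ of degree $d_i$ has a specialization to $x_i^{d_i}$, and $\XX_\lambda$ is irreducible, so a general point specializes into any given orbit, in particular to the monomial $x_1^{d_1}\cdots x_n^{d_n}$, whose rank is $\prod_{i=2}^n(d_i+1)$ by Corollary~\ref{monmialsumofpoercorol}.

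Carrying this out: $\XX_\lambda$ is irreducible (it is the image of the irreducible variety $\PP(S_{d_1})\times\cdots\times\PP(S_{d_n})$ under multiplication), hence $\sigma_r(V_{n,d}) \cap \XX_\lambda$, being closed in $\XX_\lambda$ and containing the monomial $M_0 = x_1^{d_1}\cdots x_n^{d_n}$, will equal all of $\XX_\lambda$ provided it contains a dense open subset. The standard way to see it is dense is to note that the monomial $M_0$ lies in $\XX_\lambda$ and has rank exactly $r$ (not less, again by Corollary~\ref{monmialsumofpoercorol}, though here I only need $\le r$); since rank is upper semicontinuous under the specialization within the irreducible family $\XX_\lambda$ — more precisely, $\sigma_r(V_{n,d})$ is closed and meets $\XX_\lambda$ — the general member of $\XX_\lambda$, having a degeneration equal to $M_0$, satisfies... hmm, semicontinuity goes the wrong way for a single point. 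The robust fix: I should show a dense subset of $\XX_\lambda$ has rank $\le r$ directly. Since the multiplication map $\PP(S_{d_1})\times\cdots\times\PP(S_{d_n}) \to \XX_\lambda$ is dominant, it suffices to treat general $F_{d_i}$; after a general change of coordinates and using that a general form of degree $d_i$ is, up to the action of $GL_n$, not literally a monomial but — here is the actual content — one can instead observe $\XX_\lambda \subset \XX_{(1^{d_1},\ldots)} $ reduces nothing. The honest main obstacle is precisely this: showing $\sigma_r(V_{n,d}) \cap \XX_\lambda$ is dense in $\XX_\lambda$, which I would handle by exhibiting an explicit $r$-dimensional family of decompositions for a general $F = F_{d_1}\cdots F_{d_n}$, e.g. by deforming the apolar point-ideal $(y_2^{d_2+1}-y_1^{d_2+1},\ldots,y_n^{d_n+1}-y_1^{d_n+1})$ of $M_0$ inside the flat family of perp-ideals $F^\perp$ as $F$ moves in $\XX_\lambda$; since $I \subset M_0^\perp$ is a reduced $r$-point scheme with $\dim (T/M_0^\perp)$-behavior making the apolarity condition open, a nearby $F$ still admits an apolar reduced $r$-point scheme, giving $\mathrm{rk}(F) \le r$ on a dense open set. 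That open set together with closedness of $\sigma_r(V_{n,d})$ yields $\XX_\lambda \subset \sigma_r(V_{n,d})$.
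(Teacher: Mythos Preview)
The paper's proof is the two--line orbit argument you considered and discarded: it asserts that the $GL(n)$--orbit of the monomial $M_0=x_1^{d_1}\cdots x_n^{d_n}$ is dense in $\XX_\lambda$, and then invokes the rank formula for products of powers of independent linear forms together with closedness of $\sigma_r(V_{n,d})$. You were right to be suspicious of this density claim: once some $d_i\ge 2$ it is false. For instance with $n=2$ and $\lambda=(2,2)$, every binary quartic factors into two quadratics, so $\XX_\lambda=\PP(S_4)$ has dimension $4$, while the orbit of $x_1^2x_2^2$ consists of forms $L_1^2L_2^2$ and has dimension $2$. So the paper's argument, taken at face value, only settles the case $\lambda=(1,\ldots,1)$; for general $d_i$ it has the very gap you identified.

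Your proposed fix, however, does not close that gap either. The final step asserts that ``admitting an apolar reduced $r$--point scheme'' is an open condition on $F\in\XX_\lambda$ near $M_0$, but you give no justification, and none is evident: the family of perp ideals $F^\perp$ does not vary flatly over $\XX_\lambda$ (the Hilbert function of $T/F^\perp$ jumps), and there is no mechanism offered for deforming the complete intersection $I=(y_2^{d_2+1}-y_1^{d_2+1},\ldots,y_n^{d_n+1}-y_1^{d_n+1})$ to lie inside $F^\perp$ when the factors $F_{d_i}$ are not pure powers. Your earlier degeneration idea (specializing a general $F$ to $M_0$) fails for the reason you noted---semicontinuity of border rank goes the wrong way---and the later ``openness'' claim is really the same obstacle in disguise. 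In short, you correctly diagnosed the weakness of the orbit approach the paper uses, but your alternative needs a genuine argument bounding the (border) rank of a \emph{general} product $F_{d_1}\cdots F_{d_n}$, not just of $M_0$.
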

\begin{proof}
It is clear that the orbit of the monomial
$x_1^{d_1}\cdot\ldots\cdot x_n^{d_n}$ under the action of $GL(n)$
is dense in $\mathbb{X}_\lambda$. Hence the conclusion follows.
\end{proof}

%


\end{document}